\numberwithin{equation}{section}
\definecolor{darkblue}{rgb}{0.0,0,0.7}
\newtheorem{theorem}{Theorem}[section]
\newtheorem{proposition}[theorem]{Proposition}
\newtheorem{corollary}[theorem]{Corollary}
\theoremstyle{definition}
\newtheorem{example}[theorem]{Example}
\newtheorem{remark}[theorem]{Remark}
\newtheorem*{proposition*}{Proposition}
\newcommand{\cD}{\ensuremath{\mathcal{D}}}
\newcommand{\cI}{\ensuremath{\mathcal{I}}}
\newcommand{\cM}{\ensuremath{\mathcal{M}}}
\newcommand{\Ifpf}{\ensuremath{\mathcal{I}^{FPF}}}
\newcommand{\id}{\ensuremath{\mathrm{id}}}
\newcommand{\fkS}{\ensuremath{\mathfrak{S}}}
\newcommand{\cyc}{\ensuremath{\mathrm{Cyc}}}
\newcommand{\ellhat}{\ensuremath{\hat{\ell}}}
\newcommand{\ellfpf}{\ensuremath{\hat{\ell}^{FPF}}}
\newcommand{\inv}{\ensuremath{\mathrm{Inv}}}
\newcommand{\invhat}{\ensuremath{\widehat{\inv}}}
\begin{document}

%%%%%%%%%%%%%%%%%%%%%%%%%%%%%%%%%%%%%%%%%%%%%%%%%%%%%%%%%%%%
%  TITLE PAGE information
%%%%%%%%%%%%%%%%%%%%%%%%%%%%%%%%%%%%%%%%%%%%%%%%%%%%%%%%%%%%

%     [Short Title]{Full Title}
\title{Involutions under Bruhat order and labeled Motzkin Paths}  

%    Information for first author
\author[M. Coopman]{Michael Coopman}
\address[MC]{Department of Mathematics, University of Florida, Gainesville, FL 32601}
\email{m.coopman@ufl.edu}
%\thanks{}

%    Information for first author
\author[Z. Hamaker]{Zachary Hamaker}
\address[ZH]{Department of Mathematics, University of Florida, Gainesville, FL 32601}
\email{zhamaker@ufl.edu}
%\thanks{}

\date{\today}

\keywords{}

\begin{abstract}
In this note, we introduce a statistic on Motzkin paths that describes the rank generating function of Bruhat order for involutions. 
Our proof relies on a bijection introduced by P.~Biane from permutations to certain labeled Motzkin paths and a recently introduced interpretation of this rank generating function in terms of visible inversions.
By restricting our identity to fixed-point-free (FPF) involutions, we recover an identity due to L.~Billera, L.~Levine and K.~M\'esz\'aros with a previous bijective proof by M.~Watson.
 Our work sheds new light on the Ethiopian dinner game.

\end{abstract}

\maketitle

\section{Introduction}
\label{s:introduction}
Let $\fkS_n$ be the symmetric group on $n$ elements, $\cI_n$ be the subset of involutions and $\Ifpf_n$ ($n$ even) be the subset of fixed-point-free (FPF) involutions.
Bruhat order on the symmetric group $(\mathfrak{S}_n,\leq)$ is a graded poset whose rank function counts the number of inversions.
The restriction of Bruhat order to $\cI$ and $\Ifpf$, first considered by RW~Richardson and T.~A.~Springer due to their relation with $K$--orbit closures~\cite{richardson1990bruhat}, are also graded partial orders (see~\cite{deodhar2001statistic,incitti2004bruhat}).
Let $R_{\cI_n}(q)$ and $R_{\Ifpf_{2n}}(q)$ be the rank generating functions of $(\cI_n,\leq)$ and $(\Ifpf_{2n},\leq)$, respectively.
For $\mathcal{M}_n$ the set of Motzkin paths of length $n$ and $\mu \in \mathcal{M}_n$, we introduce a generating function $H[\mu;q]$ in Equation~\eqref{eq:stat} satisfying the identity:
\begin{theorem} \label{t:main}
For all $n \in \mathbb{N},$
\[\sum_{\mu \in \mathcal{M}_n} H[\mu;q] = R_{\cI_n}(q),\]
\end{theorem}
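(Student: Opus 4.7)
The plan is to derive the identity by combining the two tools advertised in the abstract: Biane's bijection $\Phi\colon \fkS_n \to \mathcal{LM}_n$ between permutations and labeled Motzkin paths, and the visible-inversion formula for the rank of involutions under Bruhat order. Under $\Phi$, a permutation $w$ produces a Motzkin path $\mu(w) \in \mathcal{M}_n$ whose $i$-th step encodes the role of $i$ in the cycle structure of $w$ (up step for an opener, down step for a closer, level step for a fixed point or a cycle interior), together with a labeling $L(w)$ of those steps that records how to stitch the openers and closers into the actual permutation.

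The first step is to describe the image $\Phi(\cI_n)$. Since every non-trivial cycle of an involution is a $2$-cycle, the admissible labelings on a fixed $\mu$ should form a tractable subset $\mathcal{L}_\mu^{\mathrm{inv}}$; in particular, the up/down pairing forced by the Motzkin structure already matches the $2$-cycle structure of involutions, so the label at each step is restricted to a specific interval whose length depends on the current height of $\mu$. I would verify directly from the construction of $\Phi$ that this description is correct and that $\Phi$ restricts to a bijection $\cI_n \simeq \bigsqcup_{\mu \in \mathcal{M}_n} \mathcal{L}_\mu^{\mathrm{inv}}$.

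The main step is to transfer the visible-inversion statistic through $\Phi$. Using the recently introduced interpretation $R_{\cI_n}(q) = \sum_{w \in \cI_n} q^{\mathrm{vinv}(w)}$, I would show that $\mathrm{vinv}(w)$ decomposes as a sum $\sum_{i=1}^n c_i(\mu(w), L(w)_i)$ of local contributions, where each $c_i$ depends only on the step type of $\mu$ at position $i$, the height of $\mu$ there, and the single label $L(w)_i$. Once that local decomposition is in place, summing over $L \in \mathcal{L}_\mu^{\mathrm{inv}}$ factors as a product of $q$-integer-style sums over the steps of $\mu$, and this product must be checked to coincide with the definition of $H[\mu;q]$ given in Equation~\eqref{eq:stat}. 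The identity of the theorem then follows by summing the equality $H[\mu;q] = \sum_{L \in \mathcal{L}_\mu^{\mathrm{inv}}} q^{\mathrm{vinv}(\Phi^{-1}(\mu,L))}$ over $\mu \in \mathcal{M}_n$.

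The hard part will be establishing that visible inversions actually localize along the steps of the Biane path. The definition of a visible inversion is a global condition on the permutation, and it is a priori conceivable that the contribution of position $i$ depends on labels at many other positions. I expect the proof to proceed by a careful case analysis — opener, closer, fixed point — using the way $\Phi$ builds $w$ by scanning left to right and how the height of $\mu$ records the "active" openers awaiting a partner, to argue that each label $L_i$ contributes an independent range of visible inversions. This is the step where the compatibility between Biane's combinatorics and the geometry of $(\cI_n, \leq)$ really has to be exploited; once it is done, the rest of the proof is bookkeeping.
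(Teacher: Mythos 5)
Your plan is essentially the paper's proof: the authors also push the visible-inversion statistic through Biane's bijection $\phi$ and show it localizes step by step, by partitioning $\invhat(\sigma)$ according to whether the smaller index $i$ of an inversion satisfies $i\le\sigma(i)$ (up/horizontal step, contributing the height $h_i(\mu)$ independently of all labels) or $i>\sigma(i)$ (down step, contributing $\lambda_i-1$). The localization you flag as the hard part does go through exactly as you hope, so the remaining work is only the short case analysis; your outline is correct and matches the paper's route.
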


Our proof relies on a bijection due to P.~Biane~\cite{biane1993permutations} that, as observed in~\cite{barnabei2011restricted}, maps involutions to Motzkin paths with labeled down steps.
In~\cite{hamaker2019schur}, Z.~Hamaker, E.~Marberg and B.~Pawlowski introduce \emph{visible inversions} as a combinatorial interpretation of rank in $(\cI_n,\leq)$.
We show $H[\mu;q]$ counts visible inversions for the involutions corresponding to $\mu$.
Similarly, Biane's bijection maps $\Ifpf_{2n}$ to $\cD_{2n}$, the set of Dyck paths with length $2n$ and labeled down steps.
As a consequence, we recover the following identity:
\begin{corollary}
	\label{c:watson}
	For all $n \in \mathbb{N}$,
	\[
	\sum_{\delta \in \cD_{2n}} H[\delta;q] = q^n R_{\Ifpf_{2n}}(q) = q^n \prod_{k =1}^n [2k{-}1]_q.
	\]
\end{corollary}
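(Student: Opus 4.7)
The plan is to obtain Corollary~\ref{c:watson} by restricting the proof of Theorem~\ref{t:main} to Dyck paths, combining this with a rank-shift between visible inversions and FPF Bruhat rank, and finishing with a direct evaluation of the labeled Dyck-path sum.

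First, I would observe that Biane's bijection sends involutions without fixed points to Motzkin paths without horizontal steps, since horizontal steps correspond exactly to fixed points under Biane's construction. Consequently the bijection restricts to one between $\Ifpf_{2n}$ and $\cD_{2n}$, and running the visible-inversions argument from the proof of Theorem~\ref{t:main} on this restricted bijection immediately yields
\[
\sum_{\delta \in \cD_{2n}} H[\delta;q] \;=\; \sum_{\sigma \in \Ifpf_{2n}} q^{\invhat(\sigma)}.
\]

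Next, I would establish the rank-shift identity $\invhat(\sigma) = n + \invfpf(\sigma)$ for every $\sigma \in \Ifpf_{2n}$. Two observations suffice. At the minimum FPF involution $\sigma_0 = (1,2)(3,4)\cdots(2n{-}1,2n)$, each two-cycle contributes one visible inversion, so $\invhat(\sigma_0) = n$, while $\invfpf(\sigma_0) = 0$. Moreover, every cover in $(\Ifpf_{2n},\leq)$ is also a cover in the ambient $(\cI_{2n},\leq)$, along which both $\invhat$ and $\invfpf$ increment by exactly $1$. Pulling $q^n$ out of the display above then produces the first equality of the corollary.

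Finally, for the second equality I would compute $\sum_{\delta \in \cD_{2n}} H[\delta;q]$ directly from the definition in Equation~\eqref{eq:stat}. The statistic should factor over the down steps of $\delta$ in such a way that summing over the labels of a fixed Dyck shape, and then over shapes, collapses to the $q$-analogue of the double factorial, giving $q^n\prod_{k=1}^n[2k{-}1]_q$ and recovering the Billera--Levine--M\'esz\'aros identity. The main obstacle is the rank-shift step: confirming that $\invhat - \invfpf$ is genuinely constant on $\Ifpf_{2n}$ requires a careful comparison of the cover relations of the FPF Bruhat order with those of the ambient involution Bruhat order, and a check that each statistic changes by $1$ along every such cover.
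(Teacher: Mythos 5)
Your overall skeleton matches the paper's: restrict $\phi$ to $\Ifpf_{2n}$ (where it lands on labeled Dyck paths, since horizontal steps correspond to fixed points), rerun the visible-inversion count from Theorem~\ref{t:main}, and absorb a uniform shift of $n$ into $q^n$. Two issues, however.

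First, the step you single out as ``the main obstacle'' --- that $|\invhat(\tau)|-\ellfpf(\tau)=n$ for all $\tau\in\Ifpf_{2n}$ --- requires no comparison of cover relations. By definition $\ellfpf=\ellhat-c$, every $\tau\in\Ifpf_{2n}$ has $c(\tau)=n$, and $\ellhat(\tau)=|\invhat(\tau)|$ by the quoted lemma of Hamaker--Marberg--Pawlowski; the shift is therefore an identity of definitions, which is exactly how the paper argues. Your proposed induction along covers would additionally have to verify that covers of $(\Ifpf_{2n},\leq)$ are covers of $(\cI_{2n},\leq)$ and that both statistics increment by one along them --- facts that essentially re-derive the gradedness statements of Proposition~\ref{p:graded} --- and you do not supply that verification. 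The route is not wrong, but it replaces a one-line computation with an unexecuted argument.

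Second, the final equality $R_{\Ifpf_{2n}}(q)=\prod_{k=1}^n[2k-1]_q$ is only asserted in your sketch: the sum of $\prod_i H_i[\delta;q]$ over all Dyck shapes does not ``factor and collapse'' termwise in any obvious way, and making this direct evaluation rigorous would itself take a recursive or transfer-matrix argument (e.g.\ peeling off the pair containing $2n$). The paper avoids this entirely: the product formula for $R_{\Ifpf_{2n}}(q)$ is recorded in Section~\ref{s:structures} as an easy induction on the rank generating function, so once the first equality of the corollary is established the second comes for free. You should either invoke that fact or actually carry out the Dyck-path computation; as written, this part of the corollary is not proved.
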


An equivalent form of Corollary~\ref{c:watson} (see Equation~\eqref{eq:BLM}) is~\cite[Cor.~8]{billera2015decompose}, where a discussion of related results appears.
M. Watson gives a bijective proof~\cite{watson2014bruhat}, but his argument is more involved than ours since it proves a stronger statement.
He introduces a partial order on full rook placements for certain diagrams and shows it is isomorphic to Bruhat order on $\Ifpf$ using his bijection.
In fact, Watson's bijection is equivalent to Biane's when restricted to $\Ifpf$, and his approach can be extended to $\cI_n$ as explained in Section~\ref{ss:watson}.
In Section~\ref{ss:blm} we observe that the bijection in~\cite{billera2015decompose}, which is not used directly in their proof of Corollary~\ref{c:watson}, is also equivalent to Biane's when restricted to $\Ifpf_{2n}$.
Their proof arises from combinatorics related to the Ethiopian dinner game introduced in~\cite{levine2012make}, and we explain how to interpret Corollary~\ref{c:watson} in this context.

\smallskip
\smallskip

\noindent\textbf{Acknowledgements:} The authors thank Oliver Pechenik, Nathan Reading and Vic Reiner for helpful conversations and encouragement and Marilena Barnabei for suggestions improving our exposition.
This work was partially supported by NSF grant DMS-2054423.

\section{Combinatorial Structures}
\label{s:structures}
\subsection{Permutations, Bruhat order, and visible inversions}
Let $\mathfrak{S}_n$ be the set of permutations from $[n] = \{1, 2, \hdots, n\}$ to itself.
All elements of $\mathfrak{S}_n$ can be written as a product of disjoint cycles.
For $\pi \in \mathfrak{S}_n$, let $\text{Inv}(\pi) = \{(i,j) \in [n]^2 \mid i<j \text{ and } \pi(i) > \pi(j)\}$ be the set of \emph{inversions} of $\pi$ and $\ell(\pi) = |\inv(\pi)|$. 
\emph{Bruhat order} $(\mathfrak{S}_n,\leq)$ is a partial order defined as the transitive closure of the relations: $\pi \lessdot (i \; j) \pi$ if $\ell((i \; j)\pi) = \ell(\pi) + 1$ with $(i \; j) \in \mathfrak{S}_n$.
The rank function of Bruhat order is $\ell.$

A permutation $\pi$ is an \emph{involution} if $\pi^2 = \id_n,$ and is \emph{fixed-point-free} (FPF) if $\pi(i) \neq i$ for all $i\in [n]$.
Let $\cI_n$ be the set of involutions of size $n$ and $\Ifpf_{2n}$ be the set of FPF involutions of size $2n$.
Bruhat order induces partial orders on $\cI_n$ and $\Ifpf_{2n}$ (see Figure~\ref{fig:Bruhat}).
For $\sigma \in \cI_n$, define $\cyc(\sigma) = \{(i,j) \in [n] \times [n]: i < j = \sigma(i)\}$, $c(\sigma) = |\cyc(\sigma)|$ and
\[
\ellhat = \frac{\ell +c}{2} \quad \mbox{and} \quad \ellfpf = \ellhat - c = \frac{\ell - c}{2}, \quad \mbox{respectively}.
\]
	
\begin{proposition}[{\cite[Thm.~5.2]{incitti2004bruhat} and \cite[Thm.~1.3]{deodhar2001statistic}}]
\label{p:graded}
Both $(\cI_n,\leq)$ and $(\Ifpf_{2n},\leq)$ are graded posets with respective rank functions $\ellhat$ and $\ellfpf$.

\end{proposition}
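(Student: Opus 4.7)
The plan is to establish both claims together by analyzing the covering relations in $(\cI_n,\leq)$ and $(\Ifpf_{2n},\leq)$ and verifying that the proposed functions $\ellhat$ and $\ellfpf$ increase by exactly one across each cover. Since the restricted orders have minimal elements ($\id_n$ in $\cI_n$, and $(1\,2)(3\,4)\cdots(2n{-}1\,2n)$ in $\Ifpf_{2n}$) at which $\ellhat$ and $\ellfpf$ both vanish, this step-by-step verification will yield both gradedness and the identification of the rank function simultaneously.

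First, I would show that every cover $\sigma \lessdot \tau$ in $(\cI_n,\leq)$ has the form $\tau = (i\,j)\sigma(i\,j)$ for some transposition $t=(i\,j)$. The point is that if $\sigma<\tau$ in Bruhat order on $\fkS_n$ with both involutions, then $\tau = \pi \sigma$ for some $\pi$ with $\ell(\tau) = \ell(\sigma)+\ell(\pi)$; choosing $\pi$ minimally and using $\sigma,\tau \in \cI_n$ forces $\pi = t$ (giving $\ell(\tau)-\ell(\sigma)=1$) or $\pi = ts$ with $t\sigma t = \sigma'$ an intermediate involution unless $\tau = t\sigma t$ (giving $\ell(\tau)-\ell(\sigma)=2$). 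Hence covers come in two flavors, with jump $1$ or $2$ in $\ell$.

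Second, I would do a case analysis based on how $\{i,j\}$ interacts with the cycle structure of $\sigma$. The jump $\ell(\tau)-\ell(\sigma)=1$ occurs exactly when conjugation by $t$ either merges two fixed points into a $2$-cycle or splits a $2$-cycle into two fixed points; in both cases $c$ changes by $\pm 1$ with matching sign, so $\ellhat = (\ell+c)/2$ changes by exactly $1$. The jump $\ell(\tau)-\ell(\sigma)=2$ occurs when the local reconfiguration rearranges a $2$-cycle with a fixed point, or swaps two $2$-cycles; in these cases $c$ is preserved, so again $\ellhat$ changes by $1$. This is exactly the case decomposition that Incitti~\cite{incitti2004bruhat} organizes into ee-, ff-, ne-, nf-, and nn-covers, which I would reproduce. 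For $\Ifpf_{2n}$, the first flavor of cover is absent since no fixed points are allowed, $c$ is constant equal to $n$, and $\ell$ jumps by $2$ at every cover, so $\ellfpf = (\ell-c)/2$ increases by exactly $1$.

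The main obstacle is the bookkeeping in the second step: one must verify in each of the five or so local configurations that the counting of inversions created or destroyed by conjugation has the parity dictated by the change in $c$. I would handle this by reducing each case to a $2\times 2$, $3\times 3$, or $4\times 4$ permutation matrix diagram and comparing $\inv$ before and after conjugation directly; once these local checks are in hand, gradedness follows formally from the fact that $\ellhat$ and $\ellfpf$ are integer-valued functions strictly increasing by $1$ along every cover of each connected poset.
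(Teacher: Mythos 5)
First, note that the paper offers no proof of this proposition: it is imported wholesale from \cite[Thm.~5.2]{incitti2004bruhat} and \cite[Thm.~1.3]{deodhar2001statistic}, so there is no internal argument to compare against. Your outline is essentially Incitti's strategy (characterize the covers of the induced order, then check that $\ellhat$ jumps by exactly $1$ across each one), but as written it has a genuine gap precisely where the real content of those theorems lies. Your step 1 rests on the claim that $\sigma<\tau$ in Bruhat order gives a factorization $\tau=\pi\sigma$ with $\ell(\tau)=\ell(\sigma)+\ell(\pi)$. That is the defining property of the (left) \emph{weak} order, not of Bruhat order, and it is false for Bruhat order already in $\fkS_3$: $s_1\leq s_1s_2$ as a subword, but $(s_1s_2)s_1^{-1}=s_1s_2s_1$ has length $3\neq 1$. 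So "choosing $\pi$ minimally" does not force $\pi$ to be a transposition or a product of two transpositions, and the dichotomy of covers you build the rest of the argument on is not established. Characterizing the covering relations of the \emph{induced} posets $(\cI_n,\leq)$ and $(\Ifpf_{2n},\leq)$ --- showing both that every candidate local move with nothing in between is of the asserted form, and conversely that each asserted move really is a cover --- is the hard part of Incitti's and Deodhar--Srinivasan's work, and the FPF case must be redone separately since a saturated chain of involutions between two comparable FPF involutions need not stay inside $\Ifpf_{2n}$.

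The proposed repair in your last paragraph also does not go through as stated: the change in $\ell$ under multiplication or conjugation by a non-adjacent transposition $(i\;j)$ is not a local quantity computable from a $3\times 3$ or $4\times 4$ pattern; it depends on how many $k$ with $i<k<j$ have $\sigma(k)$ between the relevant values. This is exactly why Incitti's cover description (his Table~1, referenced in Remark~\ref{rem:poset}) carries side conditions on the entries between $i$ and $j$, and why the claim "$c$ is preserved, so $\ell$ jumps by $2$" needs those conditions rather than following from the case label. If you want a correct self-contained route, you must either reproduce Incitti's classification with its admissibility conditions, or take a different tack entirely (e.g., the visible-inversion interpretation of $\ellhat$ from \cite{hamaker2019schur} quoted in Section~\ref{s:structures}, combined with an explicit saturated-chain construction). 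As it stands, the argument assumes the conclusion of the cited theorems at the point where it invokes the structure of the covers.
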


Note that $\ellfpf$ is well-defined for $\sigma \in \cI_n$, where it is studied in~\cite{deodhar2001statistic}.
See Equation~\eqref{eq:deodhar} for further discussion.
When $\tau \in \Ifpf_{2n}$, we have $c(\tau) = n$ so $\ellfpf(\tau) = \ellhat(\tau) - n = \frac{\ell(\tau) - n}{2}$.

For $\pi \in \mathfrak{S}_n$, say an inversion $(i,j) \in \inv(\pi)$ is \emph{visible} if $\pi(j) \leq \min(i, \pi(i))$.
Let $\invhat(\pi)$ be the set of visible inversions for $\pi$, respectively.
For example, with $\sigma = (1,5)(2)(3,6)(4) = 526413$, we have
\[
 \invhat(\sigma) = \{(1,5),(2,5),(3,5),(3,6),(4,5),(4,6)\} \quad \mbox{while} \quad
\inv(\sigma) = \invhat(\sigma) \cup  \{(1,2),(1,4),(1,6),(3,4)\}.
\]
Since $c(\sigma) = 2$, we see $\ellhat(\sigma) = \frac{10+2}{2} = 6 = |\invhat(\sigma)|$.
Note $\cyc(\sigma) \subseteq \invhat(\sigma)$ for all $\sigma \in \cI_n$.
\begin{proposition}[{\cite[Lem.~4.11]{hamaker2019schur}}]
	For $\sigma \in \cI_n$ we have 
	\[
	\ellhat(\sigma) = |\invhat(\sigma)| \quad \mbox{hence}  \quad \ellfpf(\sigma) = |\invhat(\sigma) \setminus \cyc(\sigma)|.
	\]
\end{proposition}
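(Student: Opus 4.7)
The strategy is to pair up inversions via an explicit involution $\iota : \inv(\sigma) \to \inv(\sigma)$ whose fixed-point set is exactly $\cyc(\sigma)$, and to show that every fixed point of $\iota$ is visible while each non-trivial $\iota$-orbit contributes exactly one visible inversion. Once this is done, counting gives
\[
|\invhat(\sigma)| \;=\; |\cyc(\sigma)| + \tfrac{1}{2}\bigl(|\inv(\sigma)| - |\cyc(\sigma)|\bigr) \;=\; \tfrac{\ell(\sigma) + c(\sigma)}{2} \;=\; \ellhat(\sigma),
\]
and the second identity $\ellfpf(\sigma) = |\invhat(\sigma) \setminus \cyc(\sigma)|$ drops out from $\cyc(\sigma) \subseteq \invhat(\sigma)$ together with $\ellfpf = \ellhat - c$.

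The involution I have in mind is $\iota(i,j) := (\sigma(j), \sigma(i))$. For $(i,j) \in \inv(\sigma)$ one has $\sigma(j) < \sigma(i)$, and applying $\sigma$ gives $\sigma(\sigma(j)) = j > i = \sigma(\sigma(i))$, so $\iota(i,j) \in \inv(\sigma)$; that $\iota^2 = \id$ is immediate from $\sigma^2 = \id$. A fixed point of $\iota$ satisfies $\sigma(j) = i$, equivalently $\sigma$ swaps $i$ and $j$, so the fixed-point set coincides with $\cyc(\sigma)$.

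The key simplification for visibility is the following: for any $(i,j) \in \inv(\sigma)$ we automatically have $\sigma(j) < \sigma(i)$, so the definition of visibility collapses to the single condition $\sigma(j) \le i$. If $(i,j) \in \cyc(\sigma)$ then $\sigma(j) = i$ and visibility is free. If instead $(i,j)$ lies in a 2-element $\iota$-orbit, then $\sigma(j) \ne i$, and the visibility conditions for $(i,j)$ and for $\iota(i,j) = (\sigma(j),\sigma(i))$ reduce respectively to the opposite strict inequalities $\sigma(j) < i$ and $i < \sigma(j)$, exactly one of which holds.

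The main obstacle is spotting the correct involution $\iota$; once $\iota(i,j) = (\sigma(j),\sigma(i))$ is on the table, verifying that it preserves $\inv(\sigma)$, has fixed points $\cyc(\sigma)$, and respects visibility as above is short bookkeeping. The only mild secondary point is confirming that visibility genuinely reduces to $\sigma(j) \le i$ on inversions, which uses only that $\sigma$ is a bijection.
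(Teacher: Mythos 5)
Your argument is correct. The paper does not actually prove this proposition itself --- it is quoted from \cite[Lem.~4.11]{hamaker2019schur} --- so there is no in-text proof to compare against; your involution $\iota(i,j)=(\sigma(j),\sigma(i))$ supplies a clean, self-contained one. Every step checks out: $\iota$ preserves $\inv(\sigma)$ because applying $\sigma$ to $\sigma(j)<\sigma(i)$ yields $j>i$; its fixed points inside $\inv(\sigma)$ are precisely the pairs with $j=\sigma(i)$, i.e.\ $\cyc(\sigma)$; on any inversion the visibility condition $\sigma(j)\le\min(i,\sigma(i))$ genuinely collapses to $\sigma(j)\le i$, since $\sigma(j)<\sigma(i)$ holds automatically; and on a two-element orbit the visibility conditions for $(i,j)$ and $(\sigma(j),\sigma(i))$ are the complementary strict inequalities $\sigma(j)<i$ and $i<\sigma(j)$, so exactly one member of each orbit is visible. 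The resulting count $|\invhat(\sigma)|=c(\sigma)+\tfrac{1}{2}\bigl(\ell(\sigma)-c(\sigma)\bigr)=\ellhat(\sigma)$ and the deduction of $\ellfpf(\sigma)=|\invhat(\sigma)\setminus\cyc(\sigma)|$ from $\cyc(\sigma)\subseteq\invhat(\sigma)$ together with $\ellfpf=\ellhat-c$ are both correct.
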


For $(P,\leq)$ a graded poset with rank function $r$, let $R_P(q) = \sum_{p \in P} q^{r(p)}$ be the \emph{rank generating function} of $P$.
By Proposition~\ref{p:graded}, we have
\begin{equation}
	\label{eq:rank-functions}
R_{\cI_n}(q) = \sum_{\sigma \in \cI_n} q^{\ellhat(\sigma)} \quad \mbox{and} \quad R_{\Ifpf_{2n}}(q) = \sum_{\tau \in \Ifpf_{2n}} q^{\ellfpf(\sigma)}.
\end{equation}
For $n \geq 1$, let $[n]_q = \frac{1-q^n}{1-q}$.
Using induction, it is easy to show that $R_{\Ifpf_{2n}}(q) = \prod^n_{k=1} [2k-1]_q := [2n-1]_q!!$, but $R_{\cI_n}(q)$ does not have a simple closed form.
However, it can be efficiently computed using the recurrence
\[R_{\cI_n}(q) = R_{\cI_{n-1}}(q) + q[n-1]_q R_{\cI_{n-2}}(q).
\]

\begin{figure}
\begin{center}
    \begin{tikzpicture} [scale = 0.75]
        \coordinate (A) at (0,0);
        \node at (A) {(1)(2)(3)(4)};
        \coordinate (B) at (-4,2);
        \node at (B) {(12)(3)(4)};
        \coordinate (C) at (0,2);
        \node at (C) {(1)(23)(4)};
        \coordinate (D) at (4,2);
        \node at (D) {(1)(2)(34)};
        \coordinate (E) at (-4,4);
        \node at (E) {(13)(2)(4)};
        \coordinate (F) at (0,4);
        \node at (F) {(12)(34)};
        \coordinate (G) at (4,4);
        \node at (G) {(1)(24)(3)};
        \coordinate (H) at (-2,6);
        \node at (H) {(14)(2)(3)};
        \coordinate (I) at (2,6);
        \node at (I) {(13)(24)};
        \coordinate (J) at (0,8);
        \node at (J) {(14)(23)};
        
        \draw[thick, color = black] ($(A)+(-0.4,0.4)$) -- ($(B)-(-0.4,0.4)$);
        \draw[thick, color = black] ($(A)+(0,0.4)$) -- ($(C)-(0,0.4)$);
        \draw[thick, color = black] ($(A)+(0.4,0.4)$) -- ($(D)-(0.4,0.4)$);
        \draw[thick, color = black] ($(B)+(0,0.4)$) -- ($(E)-(0,0.4)$);
        \draw[thick, color = black] ($(B)+(0.4,0.4)$) -- ($(F)-(0.4,0.4)$);
        \draw[thick, color = black] ($(C)+(-0.4,0.4)$) -- ($(E)-(-0.4,0.4)$);
        \draw[thick, color = black] ($(C)+(0.4,0.4)$) -- ($(G)-(0.4,0.4)$);
        \draw[thick, color = black] ($(D)+(-0.4,0.4)$) -- ($(F)-(-0.4,0.4)$);
        \draw[thick, color = black] ($(D)+(0,0.4)$) -- ($(G)-(0,0.4)$);
        \draw[thick, color = black] ($(E)+(0.2,0.4)$) -- ($(H)-(0.2,0.4)$);
        \draw[thick, color = black,dashed] ($(E)+(0.6,0.4)$) -- ($(I)-(0.6,0.4)$);
        \draw[thick, color = black,dashed] ($(F)+(-0.2,0.4)$) -- ($(H)-(-0.2,0.4)$);
        \draw[thick, color = black] ($(F)+(0.2,0.4)$) -- ($(I)-(0.2,0.4)$);
        \draw[thick, color = black] ($(G)+(-0.6,0.4)$) -- ($(H)-(-0.6,0.4)$);
        \draw[thick, color = black,dashed] ($(G)+(-0.2,0.4)$) -- ($(I)-(-0.2,0.4)$);
        \draw[thick, color = black] ($(H)+(0.2,0.4)$) -- ($(J)-(0.2,0.4)$);
        \draw[thick, color = black] ($(I)+(-0.2,0.4)$) -- ($(J)-(-0.2,0.4)$);
    \end{tikzpicture}
    \end{center}
\caption{Bruhat order for $\cI_4$.
The dashed edges do not appear in $<_W$ (see Remark~\ref{rem:poset}).}
\label{fig:Bruhat}	
\end{figure}
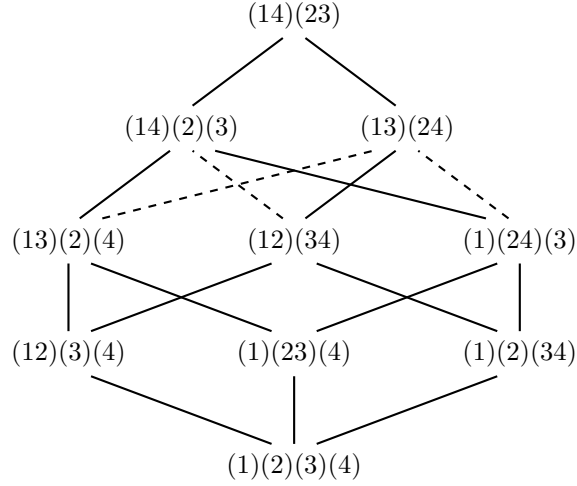

\subsection{Labeled Motzkin path, statistics and bijections}

Recall a \emph{Motzkin path} of \emph{length} $n$ is a lattice path from (0,0) to (n,0) consisting of up steps $U = (1,1)$, down steps $D = (-1,1)$, and horizontal steps $L = (0,1)$  that does not go below the x-axis.
Let $\cM_n$ be the set of Motzkin paths of length $n$.
For $\mu \in \cM_n$, the \emph{height} of the $i$th step is the larger of the step's $y$-coordinates, denoted $h_i(\mu)$.

For our purposes, a \emph{labeled Motzkin path} is a Motzkin path $\mu = \mu_1 \dots \mu_n$ where each down step $\mu_i = D$ is labeled with an integer $\lambda_i \in [h_i(\mu)]$.
Let $\cM^L_n$ be the set of labeled Motzkin paths of length $n$.
As observed in~\cite{barnabei2011restricted}, the bijection in~\cite{biane1993permutations} restricts to a simple bijection $\phi:\cI_n \to \cM^L_n$.
For $\sigma \in \cI_n$, let $\mu = \mu_1 \dots \mu_n$ with
\[
\mu_i = \begin{cases}
 U & i < \sigma(i)\\
 L & i = \sigma(i)\\
 D & i > \sigma(i)
 \end{cases}.
\]
When $\mu_i = D$, let $\lambda_i$ be the number of integers $j \geq i$ such that $\pi(j) \leq \pi(i)$ and define $\phi(\sigma) = (\mu,\lambda)$.
Note that this description deviates from Biane's, in that his Motzkin paths receive two labels at every horizontal and down step.
For involutions, the double labels are redundant: down steps are labeled twice by the same value and horizontal steps are labeled twice by their heights.

For $(\mu,\lambda) \in \cM^L_n$ with $\mu = \mu_1 \dots \mu_n$, define
\[
H_i(\mu,\lambda) = \begin{cases}
 \lambda_i(\mu)-1 & \mu_i = D\\
  h_i(\mu) & 	\mu_i \neq D
 \end{cases}
\quad \mbox{and} \quad H(\mu,\lambda) = \sum_{i = 1}^n H_i(\mu,\lambda).
\]
Next, define
\begin{equation}
	\label{eq:stat}
	H[\mu;q] = \sum_{\lambda:(\mu,\lambda) \in \cM_n} q^{H(\mu,\lambda)}.
\end{equation}
Alternatively, with 
\begin{equation}
H_i[\mu;q] = \begin{cases} [h_i(\mu)]_q & \mu_i = D\\
 q^{h_i(\mu)}	& \mbox{else}
 \end{cases},
 \quad \mbox{we have} \quad H[\mu;q] = \prod_{i=1}^n H_i[\mu;q].
\label{eq:H-stat}	
\end{equation}
Note $H[\mu;1]$ counts the number of labelings $\lambda$ for $\mu$.
Define $\tilde{H}_i[\mu;q]$ and $\tilde{H}[\mu;q]$ analogously, omitting a factor of $q$ on each upstep $\mu_i = U$.
For example, with $(\mu,\lambda)$ as in Figure~\ref{fig:bijection}, we have $H(\mu,\lambda) = 18$ and
\[
\{H_i[\mu;q]\}_{i=1}^{11} = \{q, q^2, q^2,1+q, q^2, q^3, q^3, q^3, 1+q+q^2, 1+q, 1\},
\]
so $H[\mu;q] = q^{16}(1+q)^2(1+q+q^2)$ and $\tilde{H}[\mu;q] = q^{12}(1+q)^2(1+q+q^2)$.

A \emph{Dyck path} is a Motzkin path without any horizontal steps.
Necessarily, Dyck paths are always of even length.
Let $\cD_{2n}$ be the set of Dyck paths of length $2n$.
By restricting $\phi$ to $\Ifpf_{2n}$, we obtain a bijection between $\Ifpf_{2n}$ and labeled Dyck paths of length $2n$, which are also known as \emph{Hermite histories}.
This restriction is well known when viewing $\Ifpf_{2n}$ as the set of perfect matchings of the complete graph $K_{2n}$. 
Since Dyck paths of length $2n$ always have $n$ down steps, we have $H[\mu;q] = q^n\tilde{H}[\mu;q]$.

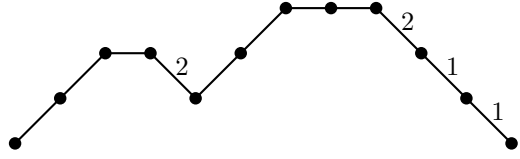
\begin{figure}
    \begin{center}
    \begin{tikzpicture} [scale = 0.6]
        \node[circle, fill = black, scale = 0.5] at (0,0) {};
        \node[circle, fill = black, scale = 0.5] at (1,1) {};
        \node[circle, fill = black, scale = 0.5] at (2,2) {};
        \node[circle, fill = black, scale = 0.5] at (3,2) {};
        \node[circle, fill = black, scale = 0.5] at (4,1) {};
        \node[circle, fill = black, scale = 0.5] at (5,2) {};
        \node[circle, fill = black, scale = 0.5] at (6,3) {};
        \node[circle, fill = black, scale = 0.5] at (7,3) {};
        \node[circle, fill = black, scale = 0.5] at (8,3) {};
        \node[circle, fill = black, scale = 0.5] at (9,2) {};
        \node[circle, fill = black, scale = 0.5] at (10,1) {};
        \node[circle, fill = black, scale = 0.5] at (11,0) {};
        \draw[thick, color = black] (0,0) -- (2,2) -- (3,2) -- (4,1) -- (6,3) -- (8,3) -- (11,0);
        \node at (3.7,1.7) {2};
        \node at (8.7,2.7) {2};
        \node at (9.7,1.7) {1};
        \node at (10.7,0.7) {1};
    \end{tikzpicture}
    \end{center}
\caption{The labeled Motzkin path corresponding to $\sigma=(1 \; 10)(2 \; 4)(3)(5 \; 9)(6 \; 11)(7)(8)$.}
\label{fig:bijection}	
\end{figure}

\section{Proof of Main Results}
\label{s:main}
\begin{proof}[Proof of Theorem \ref{t:main}]
Let $\sigma \in \cI_n$ and $(\mu,\lambda) = \phi(\sigma)$.
We split $\invhat(\sigma)$ into 2 disjoint classes $S_1$ and $S_2$.

\textbf{Class 1: } $S_1 = \{ (i,j) \in \invhat(\sigma) \mid i \leq \sigma(i)\}$.\\ 
Fix $s \in [n]$ such that $s \leq \sigma(s)$.
The number of integers $t$ such that $(s,t) \in S_1$ is equal to the number of $(u, t) \in \cyc(\sigma)$ such that $u \leq s < t$.
As $s$ corresponds to a horizontal or up step in $\mu$, $h_s(\mu)$ counts the number of up steps in excess of the number of down step among the first $s$ steps.
Through the bijection $\phi$, $h_s(\mu)$ counts the number of $(u, \sigma(u)) \in \cyc(\sigma)$ such that $u \leq s < \sigma(u)$.
Thus, $|S_1|$ is equal to the sum of the heights of all horizontal and up steps of $\mu$.

\textbf{Class 2: }: $S_2 = \{ (i,j) \in \invhat(\sigma) \mid i > \sigma(i)\}$.\\ 
As before, fix $s \in [n]$ such that $s > \sigma(s)$ and count the number of integers $t$ such that $(s,t) \in S_2$.
This requires counting the number of $(u, t) \in \cyc(\sigma)$ such that $u \leq \sigma(s) < s < t$.
Note that the weak inequality is changed to strict as $s < t$.
As $s$ corresponds to an down step in $\mu$, the $s$-th step is labeled by some integer $1 \leq \lambda_s \leq h_s(\mu)$.
Note that $\lambda_s$ refers to the number of integers $t \geq s$ such that $\sigma(t) \leq \sigma(s)$.
Thus, $\lambda_s -1$ refers to the number of $(\sigma(t), t) \in \cyc(\sigma)$ such that $\sigma(t) < \sigma(s) < s < t$.
Thus, $|S_2|$ is equal to the sum of the labels (minus 1) of all down steps of $\mu$. \\

As a consequence, we see $|\invhat(\sigma)| = H(\mu,\lambda)$.
Therefore since $\ellhat(\sigma) = |\invhat(\sigma)|$ we have
\[
    R_{\mathcal{I}_n}(q) = \sum_{\mu \in \mathcal{M}_n} H[\mu; q],
    \vspace{ -1 pt}
\]
which completes our proof.
\end{proof}

\begin{example}
In Figure ~\ref{fig:bijection}, $\sigma$ has eighteen visible inversions.
The two from Class 2 are $(4,10)$ and $(9,10)$, corresponding to the labels $2$ in positions $4$ and $9$, respectively.
The remaining sixteen visible inversions are from Class 1, and are encoded in the underlying Motzkin path.
\end{example}

One could also derive Theorem~\ref{t:main} from~\cite[\S3.2]{biane1993permutations} and Proposition~\ref{p:graded}, but this would require a non-trivial modification of Biane's statistics.
We prefer a self-contained proof, given its ease and brevity.

\begin{proof} [Proof of Corollary ~\ref{c:watson}]
    The restriction of $\phi$ to $\Ifpf_{2n}$ maps FPF involutions to labeled Dyck paths of length $2n$.
    For $\tau \in \Ifpf_{2n},$ note that $\ellhat(\tau) - \ellfpf(\tau) = \frac{1}{2}(|\cyc(\tau)| + n) = n$.
    Thus, weighting $\Ifpf$ by $\ellhat$ rather than $\ellfpf$ results in $q^n R_{\Ifpf_{2n}}(q)$ as desired. 
\end{proof}

If we instead sum over the statistic $\tilde{H}[\mu;q]$ defined after Equation~\eqref{eq:H-stat}, we obtain
\begin{equation}
\label{eq:BLM}
\sum_{\delta \in \cD_{2n}} \tilde{H}[\delta;q] = R_{\Ifpf_{2n}}(q) = \prod_{k=1}^n [2k-1]_q.	
\end{equation}

\begin{remark}
\label{rem:poset}
An alternate approach to proving Theorem~\ref{t:main}, used in~\cite{watson2014bruhat} to prove Corollary~\ref{c:watson}, is to describe the image of $(\cI_n,\leq)$ under $\phi$.
One may show for an arbitrary cover relation $\tau \lessdot \sigma$ that $H(\phi(\sigma)) = H(\phi(\tau)) + 1$.
An explicit description of cover relations appears in~\cite[Tab.~1]{incitti2004bruhat}.
This strategy is easier to implement for the \emph{weak order for involutions}, denoted $(\cI_n,\leq_W)$, another partial order introduced in~\cite{richardson1990bruhat} with the same rank generating function $R_{\cI_n}(q)$ but fewer cover relations.
Using Figure~\ref{fig:weak}, one can verify for $\tau \lessdot_W \sigma$ that $H(\phi(\sigma)) = H(\phi(\tau)) + 1$ .

\end{remark}

\section{Connections to related work}

\subsection{Watson's description} \label{ss:watson}
We explain the relationship between Corollary ~\ref{c:watson} and~\cite[Thm. 1]{watson2014bruhat}:\begin{equation}
\sum_{\delta \in \cD_{2n}} \prod_{i \in [n]} q^{d_i(\delta) - 2i} [d_i(\delta) - 2i + 1]_q = \prod_{k=1}^n [2k-1]_q., 
\label{eq:watson}	
\end{equation}
where $d_i(\delta)$ is the position of the $i$-th down step in $\delta$.
If $d_i(\delta) = i',$ then the height of the $i'$-th step would the number of up steps $(d_i(\delta) - i)$ before $i'$ in excess of the number of down step $(i - 1)$ before $i'$.
Thus, we have that the height of the $i$-th down step is $d_i(\delta) - 2i + 1$, so Equations~\eqref{eq:BLM} and~\eqref{eq:watson} are equivalent.

Watson interprets Equation~\eqref{eq:watson} in terms of fully packed rook placements on Young diagrams.
These correspond to labeled Dyck paths as follows.
For $(\delta = \delta_1 \dots \delta_{2n},\lambda)$ a labeled Dyck path, the corresponding Young diagram is cut out by the lattice path from $(0,n)$ to $(n,0)$ whose $i$-th step is vertical if $\delta_i = D$ and horizontal if $\delta_i = U$.
Each downward step is associated with a row on the diagram.
Starting with the first downward step, place a rook in the $\lambda$-th leftmost valid (not in the same column as another rook) spot in the associated row.
To show this is reversible, note that $\delta$ can be recovered from the shape of the diagram and the down step labeling can be iteratively recovered by the rooks.

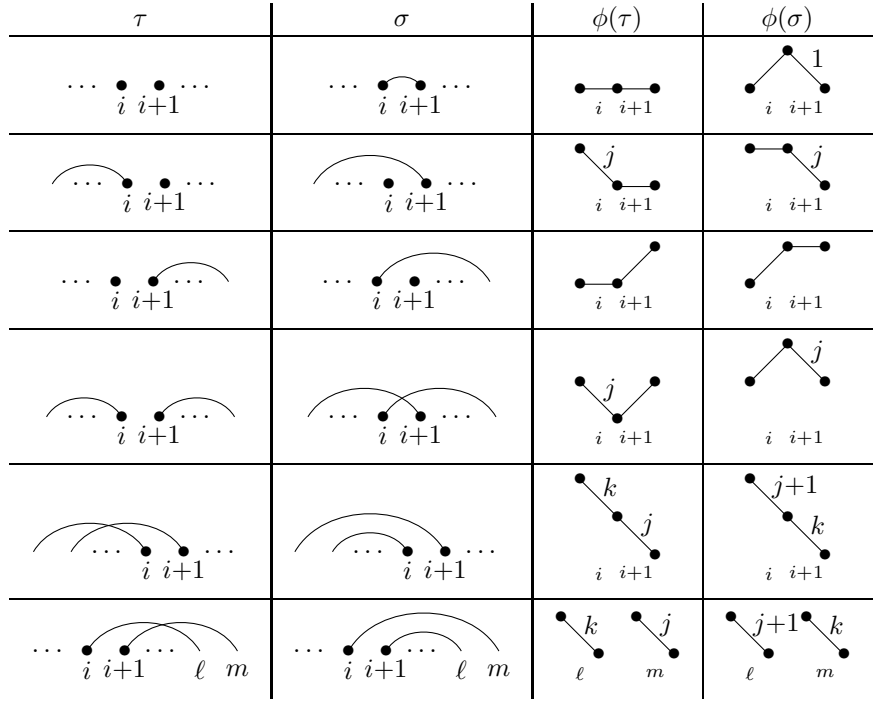
\begin{figure}
\label{fig:weak}	
\begin{tabular}{c|c|c|c}
$\tau$ & $\sigma$ & $\phi(\tau)$ & $\phi(\sigma)$\\
\hline
\begin{tikzpicture}[scale=.5]
	\node at (-1,0) {$\dots$};
	\node at (2,0) {$\dots$};
	
	\node at (0,0) (i) {$\bullet$};
	\node at (1,0) (i+1) {$\bullet$};
	
	\node at (0,-.5) {$i$};
	\node at (1,-.5) {$i{+}1$};
\end{tikzpicture}
&
\begin{tikzpicture}[scale=.5]
	\node at (-1,0) {$\dots$};
	\node at (2,0) {$\dots$};

	\node at (0,0) (i) {$\bullet$};
	\node at (1,0) (i+1) {$\bullet$};
	
	\node at (0,-.5) {$i$};
	\node at (1,-.5) {$i{+}1$};
	
	\draw[bend right] (i.center) to [out=60,in=120] (i+1.center);
\end{tikzpicture}
&
\begin{tikzpicture}[scale=.5]
\node at (0,0) (0) {$\bullet$};
\node at (1,0) (1) {$\bullet$};
\node at (2,0) (2) {$\bullet$};
	\draw (0.center) -- (1.center) -- (2.center);
	
\node at (.5,-.5) {\scriptsize$i$};
\node at (1.5,-.5) {\scriptsize$i{+}1$};
\end{tikzpicture}
&
\begin{tikzpicture}[scale=.5]
\node at (0,0) (0) {$\bullet$};
\node at (1,1) (1) {$\bullet$};
\node at (2,0) (2) {$\bullet$};
\draw (0.center) -- (1.center) --  (2.center);

\node at (1.8,.8) {1};

\node at (.5,-.5) {\scriptsize$i$};
\node at (1.5,-.5) {\scriptsize$i{+}1$};
\end{tikzpicture}
\\
\hline
\begin{tikzpicture}[scale=.5]
	\node at (-1,0) {$\dots$};
	\node at (2,0) {$\dots$};
	
	\node at (0,0) (i) {$\bullet$};
	\node at (1,0) (i+1) {$\bullet$};
	
	\node at (0,-.5) {$i$};
	\node at (1,-.5) {$i{+}1$};
	
	\draw[bend right] (-2,0) to [out=60,in=120] (i.center);
\end{tikzpicture}
&
\begin{tikzpicture}[scale=.5]
	\node at (-1,0) {$\dots$};
	\node at (2,0) {$\dots$};

	\node at (0,0) (i) {$\bullet$};
	\node at (1,0) (i+1) {$\bullet$};
	
	\node at (0,-.5) {$i$};
	\node at (1,-.5) {$i{+}1$};
	
	\draw[bend right] (-2,0) to [out=60,in=120] (i+1.center);
\end{tikzpicture}
&
\begin{tikzpicture}[scale=.5]
\node at (0,1) (0) {$\bullet$};
\node at (1,0) (1) {$\bullet$};
\node at (2,0) (2) {$\bullet$};
	\draw (0.center) -- (1.center) -- (2.center);
\node at (.8,.8) {$j$};
\node at (.5,-.5) {\scriptsize$i$};
\node at (1.5,-.5) {\scriptsize$i{+}1$};
\end{tikzpicture}
&
\begin{tikzpicture}[scale=.5]
\node at (0,1) (0) {$\bullet$};
\node at (1,1) (1) {$\bullet$};
\node at (2,0) (2) {$\bullet$};
\draw (0.center) -- (1.center) --  (2.center);

\node at (1.8,.8) {$j$};
\node at (.5,-.5) {\scriptsize$i$};
\node at (1.5,-.5) {\scriptsize$i{+}1$};
\end{tikzpicture}
\\
\hline
\begin{tikzpicture}[scale=.5]
	\node at (-1,0) {$\dots$};
	\node at (2,0) {$\dots$};
	
	\node at (0,0) (i) {$\bullet$};
	\node at (1,0) (i+1) {$\bullet$};
	
	\node at (0,-.5) {$i$};
	\node at (1,-.5) {$i{+}1$};
	
	\draw[bend left] (3,0) to [out=-60,in=-120] (i+1.center);
\end{tikzpicture}
&
\begin{tikzpicture}[scale=.5]
	\node at (-1,0) {$\dots$};
	\node at (2,0) {$\dots$};

	\node at (0,0) (i) {$\bullet$};
	\node at (1,0) (i+1) {$\bullet$};
	
	\node at (0,-.5) {$i$};
	\node at (1,-.5) {$i{+}1$};
	
	\draw[bend left] (3,0) to [out=-60,in=-120] (i.center);
\end{tikzpicture}
&
\begin{tikzpicture}[scale=.5]
\node at (0,0) (0) {$\bullet$};
\node at (1,0) (1) {$\bullet$};
\node at (2,1) (2) {$\bullet$};
	\draw (0.center) -- (1.center) -- (2.center);
	\node at (.5,-.5) {\scriptsize$i$};
\node at (1.5,-.5) {\scriptsize$i{+}1$};
\end{tikzpicture}
&
\begin{tikzpicture}[scale=.5]
\node at (0,0) (0) {$\bullet$};
\node at (1,1) (1) {$\bullet$};
\node at (2,1) (2) {$\bullet$};
\draw (0.center) -- (1.center) --  (2.center);
\node at (.5,-.5) {\scriptsize$i$};
\node at (1.5,-.5) {\scriptsize$i{+}1$};
\end{tikzpicture}
\\
\hline
\begin{tikzpicture}[scale=.5]
	\node at (-1,0) {$\dots$};
	\node at (2,0) {$\dots$};
	
	\node at (0,0) (i) {$\bullet$};
	\node at (1,0) (i+1) {$\bullet$};
	
	\node at (0,-.5) {$i$};
	\node at (1,-.5) {$i{+}1$};
	
	\draw[bend right] (-2,0) to [out=60,in=120] (i.center);
	\draw[bend right] (3,0) to [out=-60,in=-120] (i+1.center);
\end{tikzpicture}
&
\begin{tikzpicture}[scale=.5]
	\node at (-1,0) {$\dots$};
	\node at (2,0) {$\dots$};

	\node at (0,0) (i) {$\bullet$};
	\node at (1,0) (i+1) {$\bullet$};
	
	\node at (0,-.5) {$i$};
	\node at (1,-.5) {$i{+}1$};
	
	\draw[bend right] (-2,0) to [out=60,in=120] (i+1.center);
	\draw[bend right] (3,0) to [out=-60,in=-120] (i.center);
\end{tikzpicture}
&
\begin{tikzpicture}[scale=.5]
\node at (0,1) (0) {$\bullet$};
\node at (1,0) (1) {$\bullet$};
\node at (2,1) (2) {$\bullet$};
	\draw (0.center) -- (1.center) -- (2.center);
\node at (.8,.8) {$j$};
\node at (.5,-.5) {\scriptsize$i$};
\node at (1.5,-.5) {\scriptsize$i{+}1$};
\end{tikzpicture}
&
\begin{tikzpicture}[scale=.5]
\node at (0,1) (0) {$\bullet$};
\node at (1,2) (1) {$\bullet$};
\node at (2,1) (2) {$\bullet$};
\draw (0.center) -- (1.center) --  (2.center);

\node at (1.8,1.8) {$j$};
\node at (.5,-.5) {\scriptsize$i$};
\node at (1.5,-.5) {\scriptsize$i{+}1$};
\end{tikzpicture}

\\
\hline
\begin{tikzpicture}[scale=.5]
	\node at (-1,0) {$\dots$};
	\node at (2,0) {$\dots$};
	
	\node at (0,0) (i) {$\bullet$};
	\node at (1,0) (i+1) {$\bullet$};
	
	\node at (0,-.5) {$i$};
	\node at (1,-.5) {$i{+}1$};
	
	\draw[bend right] (-2,0) to [out=60,in=120] (i+1.center);
	\draw[bend right] (-3,0) to [out=60,in=120] (i.center);
\end{tikzpicture}
&
\begin{tikzpicture}[scale=.5]
	\node at (-1,0) {$\dots$};
	\node at (2,0) {$\dots$};

	\node at (0,0) (i) {$\bullet$};
	\node at (1,0) (i+1) {$\bullet$};
	
	\node at (0,-.5) {$i$};
	\node at (1,-.5) {$i{+}1$};
	
	\draw[bend right] (-2,0) to [out=60,in=120] (i.center);
	\draw[bend right] (-3,0) to [out=60,in=120] (i+1.center);
\end{tikzpicture}
&
\begin{tikzpicture}[scale=.5]
\node at (0,2) (0) {$\bullet$};
\node at (1,1) (1) {$\bullet$};
\node at (2,0) (2) {$\bullet$};
	\draw (0.center) -- (1.center) -- (2.center);
\node at (.8,1.8) {$k$};
\node at (1.8,.8) {$j$};
\node at (.5,-.5) {\scriptsize$i$};
\node at (1.5,-.5) {\scriptsize$i{+}1$};
\end{tikzpicture}
&
\begin{tikzpicture}[scale=.5]
\node at (0,2) (0) {$\bullet$};
\node at (1,1) (1) {$\bullet$};
\node at (2,0) (2) {$\bullet$};
\draw (0.center) -- (1.center) --  (2.center);
\node at (1.2,1.8) {$j{+}1$};
\node at (1.8,.8) {$k$};
\node at (.5,-.5) {\scriptsize$i$};
\node at (1.5,-.5) {\scriptsize$i{+}1$};
\end{tikzpicture}

\\
\hline
\begin{tikzpicture}[scale=.5]
	\node at (-1,0) {$\dots$};
	\node at (2,0) {$\dots$};
	
	\node at (0,0) (i) {$\bullet$};
	\node at (1,0) (i+1) {$\bullet$};
	
	\node at (0,-.5) {$i$};
	\node at (1,-.5) {$i{+}1$};
	
	\node at (3,-.5) {$\ell$};
	\node at (4,-.5) {$m$};
	
	\draw[bend right] (3,0) to [out=-60,in=-120] (i.center);
	\draw[bend right] (4,0) to [out=-60,in=-120] (i+1.center);
\end{tikzpicture}
&
\begin{tikzpicture}[scale=.5]
	\node at (-1,0) {$\dots$};
	\node at (2,0) {$\dots$};

	\node at (0,0) (i) {$\bullet$};
	\node at (1,0) (i+1) {$\bullet$};
	
	\node at (0,-.5) {$i$};
	\node at (1,-.5) {$i{+}1$};
	
	\node at (3,-.5) {$\ell$};
	\node at (4,-.5) {$m$};
	
	\draw[bend right] (3,0) to [out=-60,in=-120] (i+1.center);
	\draw[bend right] (4,0) to [out=-60,in=-120] (i.center);
\end{tikzpicture}
&
\begin{tikzpicture}[scale=.5]
\node at (0,1) (0) {$\bullet$};
\node at (1,0) (1) {$\bullet$};
\node at (2,1) (2) {$\bullet$};
\node at (3,0) (3) {$\bullet$};
	\draw (0.center) -- (1.center);
	\draw (2.center) -- (3.center);
\node at (.8,.8) {$k$};
\node at (2.8,.8) {$j$};

\node at (.5,-.5) {\scriptsize$\ell$};
\node at (2.5,-.5) {\scriptsize$m$};
\end{tikzpicture}
&
\begin{tikzpicture}[scale=.5]
\node at (0,1) (0) {$\bullet$};
\node at (1,0) (1) {$\bullet$};
\node at (2,1) (2) {$\bullet$};
\node at (3,0) (3) {$\bullet$};
	\draw (0.center) -- (1.center);
	\draw (2.center) -- (3.center);
\node at (1.2,.8) {$j{+}1$};
\node at (2.8,.8) {$k$};

\node at (.5,-.5) {\scriptsize$\ell$};
\node at (2.5,-.5) {\scriptsize$m$};
\end{tikzpicture}

\end{tabular}
\caption{Cover relations $\tau \lessdot_W \sigma$ for $\tau,\sigma \in \cI_n$, with involutions depicted as partial matchings.}	

\end{figure}
\subsection{$\Ifpf_{2n}$ and the Ethiopian dinner game} \label{ss:blm}
In~\cite{billera2015decompose}, the authors study the \emph{Ethiopian dinner game}.
Alice and Bob are sharing a meal with morsels $\{1,\dots,2n\}$.
Bob prefers larger-valued morsels, while Alice prefers larger values $\pi_k$ for some permutation $\pi = \pi_1 \dots \pi_{2n}$.
The players alternate choice of morsel, beginning with Alice.
The optimal strategy for both players is best explained by describing the reverse order in which morsels are chosen -- Bob chooses the smallest unselected morsel in $\pi_1 \dots \pi_{2n}$, then Alice chooses the leftmost unselected morsel and so on, resulting in an allocation function $w:[2n] \to \{A,B\}$~\cite{kohler1971class,levine2012make}.

The main result~\cite[Thm.~1]{billera2015decompose} is a bijection from permutations in $S_{2n}$ to pairs of labeled Dyck paths obtained by analyzing the Ethiopian dinner game.
Given a permutation $\pi = \pi_1 \dots \pi_{2n}$ with optimal allocation $w$, construct the Dyck paths $\delta^A(\pi) = \delta_1^A \dots \delta_{2n}^A$ and $\delta^B(\pi) = \delta^B_1 \dots \delta^B_{2n+2}$ by setting
\[
\delta_i^A = \begin{cases}
	D & w(\pi_i) = A\\
	U & \mbox{else},
\end{cases}
\quad \mbox{and} \quad
\delta_j^B = \begin{cases}
	D & w(j+1) =  B \ \mbox{or}\ j = 2n+2\\
	U & \mbox{else}.
\end{cases}
\]
Note by construction that $\delta^B_1 = U$.
For $\delta^A_i = D$ and $\delta^B_j = D$ ($j \neq 2n+2$), define labels $\lambda^A$ and $\lambda^B$ by
\[
\lambda^A(i) = 1 + \# \{k < i: \delta^A_k = D, \pi^{-1}_k > \pi^{-1}_i\} \quad \mbox{and} \quad \lambda^B(j) = 1 + \#\{\ell > j: \delta^B_\ell = D, \pi_{j-1} > \pi_{\ell-1}\}.
\]
For example, with $\tau = 9 4 3 2 8 \ 10\ 7 5 1 6$, we have $w = A A B B A A A B B B$ and
\[
(\delta^A,\lambda^A) = 
\begin{tikzpicture}[scale=.4]
	\node[circle, fill = black, scale = 0.3] at (0,0) {};
	\node[circle, fill = black, scale = 0.3] at (1,1) {};
	\node[circle, fill = black, scale = 0.3] at (2,2) {};
	\node[circle, fill = black, scale = 0.3] at (3,3) {};
	\node[circle, fill = black, scale = 0.3] at (4,2) {};
	\node[circle, fill = black, scale = 0.3] at (5,3) {};
	\node[circle, fill = black, scale = 0.3] at (6,4) {};
	\node[circle, fill = black, scale = 0.3] at (7,3) {};
	\node[circle, fill = black, scale = 0.3] at (8,2) {};
	\node[circle, fill = black, scale = 0.3] at (9,1) {};
	\node[circle, fill = black, scale = 0.3] at (10,0) {};
	
	\draw (0,0) -- (1,1) -- (2,2) -- (3,3) -- (4,2) -- (5,3) -- (6,4) -- (7,3) -- (8,2) -- (9,1) -- (10,0);
	
	\node at (3.75,3) {2};
	\node at (6.75,4) {3};
	\node at (7.75,3) {2};
	\node at (8.75,2) {1};
	\node at (9.75,1) {1};
	
	\node[red] at (3.75,5) {2};
	\node[red] at (6.75,5) {7};
	\node[red] at (7.75,5) {5};
	\node[red] at (8.75,5) {1};
	\node[red] at (9.75,5) {6};
	
\end{tikzpicture},
\quad
(\delta^B,\lambda^B) = 
\begin{tikzpicture}[scale=.4]
	\node[circle, fill = black, scale = 0.3] at (0,0) {};
	\node[circle, fill = black, scale = 0.3] at (1,1) {};
	\node[circle, fill = black, scale = 0.3] at (2,2) {};
	\node[circle, fill = black, scale = 0.3] at (3,3) {};
	\node[circle, fill = black, scale = 0.3] at (4,2) {};
	\node[circle, fill = black, scale = 0.3] at (5,1) {};
	\node[circle, fill = black, scale = 0.3] at (6,2) {};
	\node[circle, fill = black, scale = 0.3] at (7,3) {};
	\node[circle, fill = black, scale = 0.3] at (8,4) {};
	\node[circle, fill = black, scale = 0.3] at (9,3) {};
	\node[circle, fill = black, scale = 0.3] at (10,2) {};
	\node[circle, fill = black, scale = 0.3] at (11,1) {};
	\node[circle, fill = black, scale = 0.3] at (12,0) {};
	
	\draw[dashed] (0,0) -- (1,1);
	\draw (1,1) -- (2,2) -- (3,3) -- (4,2) -- (5,1) -- (6,2) -- (7,3) -- (8,4) -- (9,3) -- (10,2) -- (11,1);
	\draw[dashed] (11,1) -- (12,0);
	
	\node at (3.75,3) {3};
	\node at (4.75,2) {2};
	\node at (8.75,4) {2};
	\node at (9.75,3) {1};
	\node at (10.75,2) {1};
	
	\node[blue] at (3.75,5) {3};
	\node[blue] at (4.75,5) {2};
	\node[blue] at (8.75,5) {5};
	\node[blue] at (9.75,5) {1};
	\node[blue] at (10.75,5) {6};
\end{tikzpicture}.
\]
Here, the red integers indicate $\sigma^{-1}_i$ for $\delta^A_i = D$ and the blue integers indicate $\sigma_{j-1}$ for $\delta^B_j = D$.

Let $(\hat{\delta}^B,\lambda^B)$ be the lattice path obtained by ignoring the first and last step of $\delta^B$.
\begin{proposition}
	\label{p:ethiopian}
	For $\pi \in S_{2n}$, $(\delta^A(\pi),\lambda^A) = (\hat{\delta}^B(\pi),\lambda^B)$ if and only if $\pi \in \Ifpf_{2n}$.
\end{proposition}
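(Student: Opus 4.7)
The plan is to prove both implications by showing, for $\pi \in \Ifpf_{2n}$, that each of $(\delta^A(\pi), \lambda^A)$ and $(\hat{\delta}^B(\pi), \lambda^B)$ coincides with Biane's labeled Dyck path $\phi(\pi)$ from Section~\ref{s:structures}, then invoking the bijectivity of the BLM construction for the converse.

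The structural core is a combinatorial lemma: when $\pi \in \Ifpf_{2n}$, the optimal Ethiopian-dinner allocation $w$ gives Alice the smaller element and Bob the larger element of each transposition pair $\{i, \pi(i)\}$. I would prove this by induction on $n$ using the reverse algorithm. In reverse round 1, Bob takes the morsel whose Alice-value $\pi_k$ is minimum, which is $\pi^{-1}(1) = \pi(1)$ -- the partner of $1$ and the larger element of the pair $\{1, \pi(1)\}$. In reverse round 2, Alice takes the leftmost remaining morsel, namely $1$. Thus the pair $\{1, \pi(1)\}$ resolves correctly, and the residual game on $[2n] \setminus \{1, \pi(1)\}$ is an Ethiopian dinner for the FPF involution induced by $\pi$, so the induction closes.

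With the lemma, the forward direction becomes transparent at the level of paths: $\delta^A_i = D \iff w(\pi_i) = A \iff \pi_i < i$, which is Biane's $D$-step condition at $i$; symmetrically, $\hat{\delta}^B_i = D \iff w(i) = B \iff i > \pi_i$. Hence both underlying paths equal $\phi(\pi)$. For the labels, one verifies by a short combinatorial argument -- using $\pi^{-1} = \pi$ and the pairing structure -- that $\lambda^A(i)$ and $\lambda^B(i+1)$ each reduce to Biane's quantity $\#\{j \geq i : \pi(j) \leq \pi(i)\}$, which on $\Ifpf_{2n}$ counts the transpositions of $\pi$ whose smaller endpoint is at most $\pi_i$ and whose larger endpoint is at least $i$.

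For the backward direction, I invoke~\cite[Thm.~1]{billera2015decompose}, which asserts that $\pi \mapsto \bigl((\delta^A(\pi), \lambda^A), (\delta^B(\pi), \lambda^B)\bigr)$ is a bijection on $S_{2n}$. The forward direction embeds $\Ifpf_{2n}$ into the set of permutations satisfying $(\delta^A, \lambda^A) = (\hat{\delta}^B, \lambda^B)$; this set has at most $|\cD_{2n}|$ elements, since each such $\pi$ is determined by its single common labeled Dyck path, and $|\cD_{2n}| = (2n{-}1)!! = |\Ifpf_{2n}|$ by Biane's bijection, so the embedding is a bijection. The main obstacle is the label identification in the forward direction: the definitions of $\lambda^A$ and $\lambda^B$ are asymmetric -- one scans leftward via $\pi^{-1}$, the other rightward via $\pi$ -- so reducing both to Biane's quantity requires a careful pairing argument that exchanges the two endpoints of each transposition of $\pi$ via the involution.
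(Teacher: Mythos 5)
Your route is genuinely different from the paper's. The paper proves both directions simultaneously by working entirely with the allocation $w$: writing $w^{-1}(A)=\{a_1<\dots<a_n\}$ and $w^{-1}(B)=\{b_1<\dots<b_n\}$, it observes that $\delta^A=\hat{\delta}^B$ says exactly that $\pi$ carries $w^{-1}(B)$ onto $w^{-1}(A)$, that $\lambda^A=\lambda^B$ then forces this map to be order-preserving (so $\pi(b_i)=a_i$), and that these conditions together characterize $\Ifpf_{2n}$; it never identifies either path with $\phi(\pi)$ and never invokes bijectivity of the Billera--Levine--M\'esz\'aros map. You instead prove the forward direction constructively --- your allocation lemma (Alice receives the smaller and Bob the larger element of each $2$-cycle, by induction on the reverse algorithm) is correct, and it makes explicit the structural fact the paper uses silently when it passes from ``$\pi(b_i)=a_i$'' to ``$\pi\in\Ifpf_{2n}$'' --- and you obtain the converse by counting, via injectivity of $\pi\mapsto(\delta^A,\lambda^A)$ on the equality locus together with $|\Ifpf_{2n}|=(2n-1)!!$, which equals the number of \emph{labeled} Dyck paths of length $2n$ (note that $\cD_{2n}$ denotes unlabeled paths, counted by Catalan numbers, so your cardinality bound should refer to labelings). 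The counting argument is sound and has the bonus of delivering the subsequent corollary $\phi(\tau)=(\delta^A(\tau),\lambda^A)$ as a byproduct, at the cost of importing the full strength of \cite[Thm.~1]{billera2015decompose}.

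The one genuine gap is the step you yourself flag as the main obstacle: the claim that $\lambda^A(i)$ and $\lambda^B(i+1)$ both reduce to Biane's label $\#\{j\ge i:\pi(j)\le\pi(i)\}$ is asserted but not carried out, and it is not routine. Unwinding the printed formula for $\lambda^A$ on an FPF involution (using $\pi^{-1}=\pi$) yields $1+\#\{(u,v)\in\cyc(\pi):\pi(i)<u<v<i\}$, the cycles nested \emph{inside} $(\pi(i),i)$, whereas Biane's label equals $1+\#\{(u,v)\in\cyc(\pi):u<\pi(i)<i<v\}$, the cycles nesting \emph{around} it; already for $\pi=4321$ these give $1$ and $2$ respectively. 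So either a corrected reading of the $\lambda^A$ convention or a nontrivial exchange argument is needed before your forward direction --- and hence the counting converse, which rests on it --- is complete. The paper's proof sidesteps this entirely because it only ever needs the condition $\lambda^A=\lambda^B$, never the common value of the two labels.
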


\begin{proof}
	Let $w$ be the allocation function for $\pi$, and write 
	\[
	w^{-1}(A) = \{a_1<\dots<a_n\}, \quad w^{-1}(B) = \{b_1 < \dots < b_n\}.
	\]
	By construction $\delta^A = \hat{\delta}^B$ if and only if both $\pi(w^{-1}(B) = w^{-1}(A))$, hence $\pi^{-1}(w^{-1}(B)) = w^{-1}(A)$.
	Moreover, $\lambda^A = \lambda^B$ if and only if $\pi(b_i) < \pi(b_j)$ for $i < j$.
	This is equivalent to saying $\pi(a_i) = b_i$ and $\pi(b_i) = a_i$, that is $\pi \in \Ifpf_{2n}$.
\end{proof}

We have the surprising but easy consequence that for $\Ifpf_{2n}$ Biane's bijection coincides with theirs.
\begin{corollary}
	For $\tau \in \Ifpf_{2n}$, $\phi(\tau) = (\delta^A(\tau),\lambda^A) = (\hat{\delta}^B(\tau),\lambda^B)$.
\end{corollary}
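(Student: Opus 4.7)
My approach begins by peeling off the second equality in the statement: since $\tau \in \Ifpf_{2n}$, Proposition~\ref{p:ethiopian} immediately yields $(\delta^A(\tau),\lambda^A) = (\hat{\delta}^B(\tau),\lambda^B)$, so the real content of the corollary is the identification $\phi(\tau) = (\delta^A(\tau),\lambda^A)$. This in turn splits cleanly into matching the underlying Dyck paths and then matching the labels at each down step.

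For the Dyck paths, I would first pin down the game's allocation $w$ on an FPF involution. The argument in Proposition~\ref{p:ethiopian} shows that whenever $\tau \in \Ifpf_{2n}$, the sets $w^{-1}(A) = \{a_1<\dots<a_n\}$ and $w^{-1}(B) = \{b_1<\dots<b_n\}$ satisfy $\tau(a_i) = b_i$. Combined with $\tau$ being an FPF involution (each cycle has exactly one element in $A$ and one in $B$), this forces $w^{-1}(A)$ to coincide with the openers $\{i : i < \tau(i)\}$ and $w^{-1}(B)$ with the closers $\{i : i > \tau(i)\}$. Granting this, $\delta^A_i = D$ iff $w(\tau_i) = A$ iff $\tau_i$ is an opener, and by $\tau^2 = \id$ this happens iff $i$ itself is a closer --- which is exactly the rule $\mu_i = D$ in Biane's bijection. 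So the underlying paths agree.

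For the labels, fix a closer $i$, so $\tau(i) < i$. The $\phi$-label is $\lambda_i = |\{j \geq i : \tau(j) \leq \tau(i)\}|$, while the BLM-label, upon substituting $\tau^{-1} = \tau$ and identifying $D$-positions with closers from the previous step, simplifies to $\lambda^A(i) = 1 + |\{k < i : k \text{ is a closer and } \tau(k) > \tau(i)\}|$. I would identify these two counts by a bijective pairing driven by $\tau^2 = \id$: the $j = i$ term on the $\phi$-side matches the $+1$ on the BLM-side, and for each $j > i$ with $\tau(j) \leq \tau(i)$ one verifies that $k := \tau(j)$ is a closer less than $i$ with $\tau(k) = j > \tau(i)$; the converse assignment $j := \tau(k)$ recovers the inverse.

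The main obstacle is the claim in the second paragraph that the game's allocation separates $[2n]$ into openers and closers. This is intuitively natural --- Alice picks first, and the opener of each two-cycle is the one grabbed earlier in the greedy interleaving --- but a rigorous justification requires either a short invocation of the Ethiopian-dinner optimal-play characterization from~\cite{levine2012make}, or a direct argument combining the order-preserving identity $\tau(a_i)=b_i$ from Proposition~\ref{p:ethiopian} with the fact that $\tau$ partitions $[2n]$ into two-cycles. Once this structural claim is in hand, the remaining verifications are short combinatorial checks of the kind carried out in the proof of Theorem~\ref{t:main}.
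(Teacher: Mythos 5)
The paper states this corollary without proof (it is offered as an ``easy consequence'' of Proposition~\ref{p:ethiopian}), so there is no argument of the authors' to compare against; I can only assess your proposal on its own terms. Your overall strategy --- peel off the second equality via Proposition~\ref{p:ethiopian}, then match the underlying Dyck paths and the labels separately --- is surely the intended one, and the path-matching is essentially right once one knows $w^{-1}(A)$ is the set of openers. Note, though, that your proposed ``direct argument'' for that structural claim is not quite sufficient as stated: $\tau(a_i)=b_i$ with both lists increasing does not by itself force $a_i<b_i$ (take $A=\{2,3\}$, $B=\{1,4\}$, $\tau=(1\,2)(3\,4)$); one also needs the ballot property of the optimal allocation, i.e.\ that $\delta^A$ really is a Dyck path, which comes from \cite{billera2015decompose}.

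The genuine gap is in the label comparison. Your pairing sends a closer $j>i$ with $\tau(j)\le\tau(i)$ to $k:=\tau(j)$ and asserts that $k$ is a closer; but $\tau(k)=j>k$, so $k$ is an \emph{opener} and does not lie in the target set $\{k<i:\ k\ \text{a closer},\ \tau(k)>\tau(i)\}$. The two quantities are genuinely different: Biane's count (beyond the $j=i$ term) counts arcs nested \emph{around} $(\tau(i),i)$, while your simplified $\lambda^A(i)$ counts arcs nested \emph{inside} it. Concretely, for $\tau=4321=(1\,4)(2\,3)$ Biane's labels are $\lambda_3=2$ and $\lambda_4=1$, whereas your formula gives $1$ and $2$ respectively --- and the value $2$ at position $4$ exceeds the height of that down step, so it is not even a valid labeling. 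The conclusion is that the transcription of the Billera--Levine--M\'esz\'aros label (the condition $k<i$ with $\pi^{-1}_k>\pi^{-1}_i$) cannot be taken at face value here; if instead $\lambda^A(i)$ counts down steps $k>i$ with $\tau(k)<\tau(i)$, plus one, then for a fixed-point-free involution the agreement with $\lambda_i=\#\{j\ge i:\tau(j)\le\tau(i)\}$ is immediate and no bijection is needed. So the label step needs to be redone from the correct form of the labeling rule rather than patched by the proposed pairing.
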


Call a permutation $\pi$ \emph{fair} if Alice and Bob eat the same morsels regardless of who chooses first.
For a fixed-point-free involution, Proposition~\ref{p:ethiopian} implies whenever Alice picks the morsel with value $i$ that Bob will next pick the morsel at position $i$.
This guarantees any $\tau \in \Ifpf_{2n}$ is fair.

\begin{proposition}
\label{p:ethiopian_order}
A permutation $\pi \in \mathfrak{S}_{2n}$ is fair if and only if $\hat{\delta}^B(\pi)$ is a labeled Dyck path.
\end{proposition}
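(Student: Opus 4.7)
The plan is to characterize fairness by a \emph{parallel} reverse algorithm and match it step-by-step against the conditions defining a labeled Dyck path. Inductively define pairs $(\alpha_k,\beta_k)$ for $k=1,\dots,n$ by letting $\alpha_k$ be the leftmost position outside $\{\alpha_i,\beta_i:i<k\}$ and $\beta_k$ be the position in that same complement with smallest $\pi$-value. Then $\alpha_k\le\beta_k$ automatically, and I claim $\pi$ is fair if and only if $\alpha_k<\beta_k$ for every $k$: absent conflicts, the Alice-first reverse (Bob, Alice, Bob, $\dots$) and the Bob-first reverse (Alice, Bob, $\dots$) interleave the same pairs and produce identical allocations, while at a conflict $\alpha_{k^*}=\beta_{k^*}$ this position is assigned to Bob in the first algorithm and to Alice in the second.

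For the forward direction, assume $\pi$ is fair. The up-steps of $\hat\delta^B$ sit at $\{\alpha_k\}$ and the down-steps at $\{\beta_k\}$, so for every $p$ the identity $k\mapsto k$ injects $\{k:\beta_k\le p\}$ into $\{k:\alpha_k\le p\}$ (since $\beta_k\le p$ forces $\alpha_k<\beta_k\le p$), giving the Dyck condition. For the labels, Bob's reverse picks have strictly increasing $\pi$-values, so the paper's formula simplifies to $\lambda^B(\beta_j)=1+|\{i<j:\beta_i>\beta_j\}|$, while a direct count of Alice- and Bob-positions below $\beta_j$ yields $h_{\beta_j}=1+|\{i\ne j:\alpha_i<\beta_j,\,\beta_i>\beta_j\}|$. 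The inequality $\lambda^B(\beta_j)\le h_{\beta_j}$ then follows from the single fact that $\beta_j$ is still present at every earlier step $i<j$, which forces $\alpha_i<\beta_j$ by fairness and embeds the set counted by $\lambda^B$ into that counted by $h_{\beta_j}$.

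For the converse, let $k^*$ be the earliest conflict step; the pairs $(\alpha_i,\beta_i)$ for $i<k^*$ behave as in the fair case. Since $\beta_{k^*}$ is the leftmost unselected position at step $k^*$, all of $\{1,\dots,\beta_{k^*}-1\}$ is already removed, and since $\{\alpha_1,\dots,\alpha_{k^*-1}\}$ contributes exactly $k^*-1$ of those positions, $|\{i<k^*:\beta_i<\beta_{k^*}\}|=\beta_{k^*}-k^*$. A direct count then gives $h_{\beta_{k^*}}=2k^*-\beta_{k^*}-1$ and $\lambda^B(\beta_{k^*})=2k^*-\beta_{k^*}$, so the label exceeds the height by exactly one at this D-step and $(\hat\delta^B,\lambda^B)$ fails to be a labeled Dyck path. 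The main obstacle will be unpacking the paper's label formula into the inversion form $1+|\{i<j:\beta_i>\beta_j\}|$ and matching it against the parallel-process height expression; once that identification is in hand, both directions reduce to the basic observation that $\beta_j$ is still available at every earlier step $i<j$, forcing $\alpha_i\le\beta_j$.
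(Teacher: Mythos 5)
Your proof is correct, and its first half---running the two reverse crossouts in parallel and locating the first step at which the leftmost unselected position coincides with the smallest-$\pi$-valued one---is essentially the same induction as the paper's, which phrases the resulting condition as ``$a_i$ lies to the left of $b_i$ in every round.'' Where you genuinely diverge is in connecting that condition to the labeled-Dyck-path property. The paper invokes the correspondence between labeled Dyck paths and matchings (a down step labeled $k$ is matched to the $k$-th leftmost unmatched up step, and the data form a valid labeled Dyck path iff every up step precedes its partner), asserting this equivalence with little justification. You instead compute heights and labels directly: the simplification $\lambda^B(\beta_j)=1+|\{i<j:\beta_i>\beta_j\}|$ (valid because Bob's reverse picks have strictly increasing values), the height identity $h_{\beta_j}=1+|\{i\neq j:\alpha_i<\beta_j,\ \beta_i>\beta_j\}|$, and the containment coming from ``$\beta_j$ is still available at step $i<j$, hence $\alpha_i<\beta_j$'' all check out, as does the count $h_{\beta_{k^*}}=2k^*-\beta_{k^*}-1$ versus $\lambda^B(\beta_{k^*})=2k^*-\beta_{k^*}$ at the first conflict. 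This buys a fully self-contained verification---in particular it cleanly handles the degenerate case where the underlying path dips to or below the axis, since there $h\le 0$ and the label-versus-height comparison still yields a violation---at the cost of more bookkeeping than the paper's matching argument. One cosmetic point: your closing line says the availability of $\beta_j$ forces $\alpha_i\le\beta_j$; in the fair case you need, and in fact have, the strict inequality, since all selected positions are distinct.
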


\begin{proof}
Let $a_i$ and $b_i$ be the position of the $i$-th morsel eaten by Alice and Bob respectively.
These values can be recovered from $\hat{\delta}^B(\pi)$.
Based on the optimal strategy, Alice ate her morsels starting from the right, so $a_i$ is the position of the $i$-th rightmost up step in $\hat{\delta}^B(\pi)$.
To recover Bob's moves, start from the leftmost down step and move right.
If a down step is labelled $k$, match this down step with the $k$-th leftmost up step that has not been matched yet.
Then $b_i$ is the position of the down step matched with the up step at $a_i$.
Note that $\hat{\delta}^B(\pi)$ is a labeled Dyck path if and only if $a_i$ is to the left of $b_i$ for all $i \in [n]$.
 
Let $a_i'$ and $b_i'$ be the position of the $i$-th morsel eaten by Alice and Bob respectively when Bob started first.
Similar to the Alice-first variant, the optimal strategy is found starting with the last moves.
Suppose that $a_n', b_n', \hdots, a_{i+1}', b_{i+1}'$ are equal to their counterparts of the original game.
Then $a_{i}' = \min \{a_1, b_1, \hdots, a_{i}, b_{i}\} = \min \{a_{i}, b_{i}\}$.
Therefore if $a_i' = a_i$ then $a_{i} < b_{i}$.
Now, suppose that $a_n, b_n, \hdots, b_{i+1}, a_{i}$ are equal to their counterparts.
Then $b_i'$ is Alice's least favorite morsel among $\{b_1, a_1, \hdots, a_{i-1}, b_i\}$, but $b_i$ is known to be Alice's least favorite morsel in the larger set $\{a_1, b_1, \hdots, b_i, a_i\}$, so $b_i' = b_i$.
Thus by induction $\pi$ is fair if and only if $a_i < b_i$ for all $i \in [n]$, which is equivalent to $\hat{\delta}^B(\pi)$ being a labelled Dyck path.
\end{proof}

\begin{remark}
As a consequence of the crossout procedure in~\cite{billera2015decompose}, Alice will always eat morsels from right to left and Bob will always eat morsels from highest to lowest.
Therefore, for a fair permutation Alice and Bob will eat the same morsels in the same order regardless of who chooses first.
\end{remark}

\begin{corollary}
	\label{c:ethiopian_count}
The number of fair permutations of length $2n$ is $(2n-1)!!^2.$
\end{corollary}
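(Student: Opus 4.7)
The plan is to combine Proposition~\ref{p:ethiopian_order} with the bijection of~\cite{billera2015decompose} to reduce the count to counting pairs of labeled Dyck paths. First recall that the~\cite{billera2015decompose} map realizes each $\pi \in \mathfrak{S}_{2n}$ by the pair of labeled paths $((\delta^A(\pi), \lambda^A), (\delta^B(\pi), \lambda^B))$, and that by construction $(\delta^A(\pi), \lambda^A)$ is always a labeled Dyck path of length $2n$. Proposition~\ref{p:ethiopian_order} then says $\pi$ is fair precisely when $(\hat{\delta}^B(\pi), \lambda^B)$ is \emph{also} a labeled Dyck path of length $2n$. So the restriction of the~\cite{billera2015decompose} bijection to fair permutations produces an injection
\[
\pi \mapsto \bigl((\delta^A(\pi), \lambda^A),\, (\hat{\delta}^B(\pi), \lambda^B)\bigr)
\]
from fair permutations into ordered pairs of labeled Dyck paths of length $2n$.

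Next I would verify that this injection is in fact a bijection onto all such pairs. Given a target pair of labeled Dyck paths $((\delta^A_0, \lambda^A_0), (\hat{\delta}^B_0, \lambda^B_0))$ of length $2n$, reconstitute $\delta^B_0$ by prepending $U$ and appending $D$ (and labeling the final $D$ trivially), then apply the inverse of~\cite{billera2015decompose}'s bijection to obtain some $\pi \in \mathfrak{S}_{2n}$. By Proposition~\ref{p:ethiopian_order}, this $\pi$ is automatically fair, and by construction maps back to the given pair.

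To finish, count the labeled Dyck paths of length $2n$: by the restriction of Biane's bijection $\phi$ to $\Ifpf_{2n}$ these are in bijection with $\Ifpf_{2n}$, so there are $(2n-1)!!$ of them (equivalently, set $q = 1$ in Corollary~\ref{c:watson}). Since the two components of the pair are chosen independently, the number of fair permutations equals $(2n-1)!!^2$.

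The main obstacle is the surjectivity verification: one must ensure that every labeled Dyck path of length $2n+2$ produced by bookending a labeled Dyck path of length $2n$ lies in the image of the~\cite{billera2015decompose} map, so that the inverse really does land in $\mathfrak{S}_{2n}$. This is a routine compatibility check on their construction rather than a genuinely new combinatorial argument, but it is the step that warrants care; everything else is immediate from Proposition~\ref{p:ethiopian_order} and the counting identity for $\Ifpf_{2n}$.
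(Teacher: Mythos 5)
Your proposal is correct and follows essentially the same route as the paper: the paper's proof likewise combines Proposition~\ref{p:ethiopian_order} with the bijection of \cite[Thm.~1]{billera2015decompose} to identify fair permutations with pairs of labeled Dyck paths, then sets $q=1$ in Corollary~\ref{c:watson} to count each factor as $(2n-1)!!$. The surjectivity point you flag is exactly what the paper delegates to \cite[Thm.~1]{billera2015decompose}, so your extra care there is reasonable but not a departure in method.
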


\begin{proof}

By Proposition ~\ref{p:ethiopian_order} and~\cite[Thm.~1]{billera2015decompose}, counting fair permutations is equivalent to counting pairs of labelled Dyck paths of length $n$.
Setting $q=1$ in Corollary ~\ref{c:watson}, this is $(2n-1)!!^2$. 
\end{proof}

Call a permutation \emph{$k$-fair} if Alice eats all but $k$ of the same morsels when going second.
Note the fair permutations are precisely the 0-fair permutation.
It would be interesting to study enumerative properties of $k$-fair permutations, and structural properties of their corresponding Dyck paths.

\subsection{A final identity} \label{ss:deodhar-srinivasan}
The following equation is~\cite[Thm. 1.2]{deodhar2001statistic} and its translation to Motzkin paths:
\begin{equation}
	\label{eq:deodhar}
	\binom{n}{k}_q = \sum_{\sigma \in \cI_n} (q-1)^{c(\sigma)} q^{\ellfpf(\sigma)} \binom{n - 2c(\sigma)}{k - c(\sigma)} = \sum_{\mu \in \cM_n} (q-1)^{s(\mu)} \tilde{H}[\mu;q] \binom{n-2s(\mu)}{k-s(\mu)}
\end{equation}
where $s(\mu)$ is the number of down steps in $\mu$.
The proof of Equation~\eqref{eq:deodhar} follows from generating function manipulations, and it would be interesting to give a direct combinatorial proof using Motzkin paths.

\bibliographystyle{amsalpha} 
\bibliography{references}

\end{document}